\DeclareMathOperator{\MGW}{MGW}
\DeclareMathOperator{\LGW}{LGW}
\DeclareMathOperator{\GW}{GW}
\newcommand{\eps}{\varepsilon}
\newcommand{\id}{\mathrm{id}}
\newcommand{\p}{\mathcal{P}}
\newcommand{\Pio}{\Pi_\opt}
\newcommand{\opt}{\mathrm{o}}
\newcommand{\vX}{\bm{X}}
\newcommand{\vx}{\bm{x}}
\newcommand{\XX}{\mathbb{X}}
\newcommand{\YY}{\mathbb{Y}}
\renewcommand{\SS}{\mathbb{S}}
\newcommand{\NN}{\mathbb{N}}
\newcommand{\weakly}{\rightharpoonup}\normalfont
\newcommand{\dx}{\,\mathrm{d}}
\DeclareMathOperator{\Tan}{Tan}
\DeclareMathOperator*{\argmin}{argmin}
\title{Multi-marginal Approximation of the Linear Gromov--Wasserstein Distance%
  \thanks{This work was funded by
      the German Research Foundation (DFG)
      within the RTG~2433 DAEDALUS
      and by the BMBF project ``VI-Screen'' (13N15754).}}
\author{Florian Beier%
  \thanks{Institute of Mathematics,
    Technische Universit\"at Berlin,
    Stra\ss{}e des 17. Juni 136,
    10623 Berlin, Germany 
    (\email{f.beier@tu-berlin.de},
    \email{robert.beinert@tu-berlin.de}).}
  \and Robert Beinert\footnotemark[2]}
\begin{document}

\maketitle

\begin{abstract}
Recently, two concepts from optimal transport theory have successfully been brought to the Gromov--Wasserstein (GW) setting.
This introduces a linear version of the GW distance and multi-marginal GW transport. 
The former can reduce the computational complexity when computing all GW distances of a large set of inputs. 
The latter allows for a simultaneous matching of more than two marginals, which can for example be used to compute GW barycenters.
The aim of this paper is to show an approximation result which characterizes the linear version as a limit of a multi-marginal GW formulation.
\end{abstract}


\section{Introduction}
A fruitful line of work in the realm of optimal transport are Gromov--Wasserstein (GW) distances \cite{memoli2011gromov},
which allow meaningful comparisons and matchings of probability measure living on different metric spaces.
The main idea is to transport mass such that distances are preserved.
In particular,
GW distances are invariant under isometric transformations,
which makes them an invaluable tool for shape matching and comparison \cite{memoli2011gromov}.
Other applications include e.g.\ dictionary learning \cite{VVFCC21} and graph prediction \cite{pmlr-v162-brogat-motte22a}. 
Since GW amounts to solving a quadratic problem,
which is computationally challenging, 
effort has been made to provide computationally tractable generalizations. 
Examples are sliced GW \cite{sliced_gw}, quantized GW \cite{quantized_gw}, and sampled GW \cite{sampled_gw}. 

In \cite{BBS2021linear},
the authors propose a linear Gromov--Wasserstein distance (LGW),
which provides a significant speed-up
when requiring all pairwise distances of a whole set of inputs
such as in classification tasks.
The motivation of LGW is based on linear optimal transport (LOT),
which was introduced by Wang et al.\ \cite{wang2013linear}. 
Since its introduction, LOT has been successfully applied
for several tasks in nuclear structure-based pathology
\cite{WOSSR2011}, parametric signal estimation \cite{RHNHR2020},
signal and image classification \cite{KTOR2016,PKKR2017,moosmueller2021linear,khurana2022supervised}, modeling of
turbulences \cite{EN2020}, cancer detection
\cite{BKR2014,OTWCKBHR2014,TYKSR2015}, Alzheimer disease detection
\cite{PNAS21}, vehicle-type recognition \cite{GLDY2019} as well as for
de-multiplexing vortex modes in optical communications
\cite{PCNWDR2018}.
Both LOT and LGW make use of the geometrical structure of the (Gromov--)Wasserstein space
and compute distances in the tangent space with respect to some a priori fixed reference.

In \cite{BBS2022multi}, the authors propose multi-marginal Gromov--Wasserstein transport to simultaneously match multiple inputs in the GW sense. Multi-marginal contexts have previously been considered in the OT setting since several decades \cite{GS98}. Most prominently, the multi-marginal formulation may be leveraged to express OT barycenter problems which were introduced in the celebrated work \cite{AC2011}.
Independently of barycenters, multi-marginal OT is pivotal in other tasks such as e.g.\ matching for teams \cite{CE10}, 
particle tracking \cite{CK18} and information fusion \cite{DC14,EHJK20}.

In this paper, we briefly go over the cornerstones related to the linear and multi-marginal version of GW. Additionally, we show an approximation result which characterizes the linear GW distance as a limit of a multi-marginal GW problems.

\section{Linear Gromov--Wasserstein}

In this section we go over fundamental definitions in relation to the Gromov--Wasserstein distance. Furthermore, after giving a brief discussion regarding the derivation of the linear version which is following a similar strategy to the derivation of LOT, we conclude the section with some examples.

A \emph{metric measure space (mm-space)} is a triple $\XX = (X, d_{X}, \mu)$, where
\begin{enumerate}[(i)]
\item $(X, d_{X})$ is a compact metric space,
\item $\mu$ is a probability measure on the Borel $\sigma$-algebra on $(X, d_X)$ with full
  support.
\end{enumerate}
The set of all Borel probability measures on $(X, d_X)$ is denoted by $\p (X)$.
For two mm-spaces  $\XX = (X,d_{X},\mu)$ and $\YY =
(Y,d_{Y}, \nu)$, 
the \emph{Gromov--Wasserstein (GW) distance} is defined by
\begin{align}
&\GW(\XX,\YY) 
\coloneqq
\inf_{\pi \in \Pi(\mu,\nu)} 
\biggl( \int_{(X \times Y)^2} \lvert d_{X}(x,x') - d_{Y}(y,y') \rvert^2 \dx \pi(x,y) \dx \pi(x',y') \biggr)^{\tfrac12}.
    \label{eq:GW}
\end{align}
Here $\pi \in \Pi(\mu,\nu)$ means that $\pi  \in \mathcal P(X\times Y)$
has marginals $\mu$ and $\nu$. Due to the
Weierstraß theorem, a minimizer in \cref{eq:GW} always exists
\cite[Cor~10.1]{memoli2011gromov}. We denote by $\Pio(\XX,\YY)$ the set of all minimizers. Two mm-spaces
$\XX = (X,d_{X},\mu)$ and
$\YY = (Y,d_{Y},\nu)$ are called \emph{isomorphic} if and
only if there exists a measure-preserving isometry $I: X \to Y$ meaning $I_\# \mu \coloneqq \mu \circ I^{-1}= \nu$. 
The isometry classes are henceforth denoted by $\llbracket \cdot \rrbracket$.
The space of all isometry classes of mm-spaces is called the \emph{Gromov--Wasserstein space},
and $\GW$ defines a metric on it. In the Euclidean setting, measure-preserving isometries are characterized by rotations, translations, and reflections,
which makes $\GW$ an invaluable tool in shape and image analysis.

The quadratic dependence on the objective makes the optimization problem in \cref{eq:GW} computationally challenging. This worsens if all pairwise distances of a whole set of mm-spaces are required. To alleviate this issue, recently, a linearized version of $\GW$ has been proposed in \cite{BBS2021linear}.
With respect to some reference mm-space $\SS = (S,d_S,\sigma)$, the \emph{linear Gromov--Wasserstein} distance between $\XX$ and $\YY$ is given by
\begin{equation}\label{eq:LGW}
\LGW_{\SS}(\XX,\YY)
= \inf_{
\substack{\pi \in \p(S \times X \times Y) \\ (P_{S \times X})_\# \pi \in \Pio(\SS,\XX) \\ 
(P_{S \times Y})_\# \pi \in \Pio(\SS,\YY)}}
\int_{(S \times X \times Y)^2}
\lvert d_X(x,x') - d_Y(y,y') \rvert^2\dx \pi(s,x,y) \dx \pi(s',x',y'),
\end{equation}
where $P_{\bullet}$ denotes the projection into the indexed subspace.
An extensive justification and further discussions of $\LGW$ are presented in \cite{BBS2021linear}.
In the following we present the main idea.
To approximate the GW distance, 
we exploit 
that the GW space is \emph{geodesic} \cite{sturm2020space}. 
That is, between every two mm-spaces $\SS$ and $\XX$, there exists a \emph{geodesic} (minimal length path) $t \mapsto \pi_t^{\SS \to \XX}$,
$t \in [0,1]$,
connecting $\SS$ and $\XX$.
Furthermore, there is a one-to-one identification between geodesics and optimal GW plans $\Pio(\SS,\XX)$.
More precisely,
every geodesic is of the form
\begin{equation*}
    t \mapsto \pi_t^{\SS \to \XX}
    \coloneqq
    \llbracket S \times X, (1-t) \, d_S + t \, d_X, \pi \rrbracket,
    \qquad
    t \in [0,1],
\end{equation*}
where $\pi \in \Pio(\SS, \XX)$, 
and where the metric between $(s,x)$ and $(s',x')$ is given by
$(1-t) \, d_S(s,s') + t \, d_X(x,x')$.
On the basis of these geodesics,
and embedding the space of mm-spaces into the space of so-called \emph{gauged measure spaces},
Sturm \cite{sturm2020space} has constructed a tangent space $\Tan_\SS$ at $\SS$, 
which especially contains all geodesics starting in $\SS$.
The constructed tangent space has again a metric structure.
For two geodesics $\pi_t^{\SS \to \XX}$ and $\pi_t^{\SS \to \YY}$ 
related to $\pi_\SS^\XX \in \Pio(\SS,\XX)$ and $\pi_\SS^\YY \in \Pio(\SS,\YY)$,
the metric on $\Tan_\SS$ between these geodesics
is given by
\begin{equation}
\label{eq:GW_S}
\inf_{
\substack{\pi \in \p(S \times X \times Y) \\ (P_{S \times X})_\# \pi = \pi_\SS^\XX \\ 
(P_{S \times Y})_\# \pi = \pi_\SS^\YY}}
\int_{(S \times X \times Y)^2}
\lvert d_X(x,x') - d_Y(y,y') \rvert^2\dx \pi(s,x,y) \dx \pi(s',x',y'),
\end{equation}
see \cite[Prop~III.1]{BBS2021linear}.
Figuratively, 
LGW in \cref{eq:LGW} can consequently be interpreted as the minimal distance between all geodesics from $\SS$ to $\XX$ and all geodesics from $\SS$ to $\YY$.

Notably, if the plans are concentrated on graphs, i.e.\ there exists $T_1 \colon \SS \to \XX$, $T_2 \colon \SS \to \YY$ so that $\pi_\SS^\XX = (\id,T_1)_\# \sigma$ and $\pi_\SS^\YY = (\id,T_2)_\# \sigma$, then the functional in \cref{eq:GW_S} simplifies to
\[
\int_{S^2} \lvert d_X(T_1(s),T_1(s')) - d_Y(T_2(s),T_2(s')) \rvert^2 \dx \sigma(s) \dx \sigma(s').
\]
This observation is the main ingredient for an approximation of $\LGW$, which constructs suitable plans using barycentric projections
and allow the efficient approximation of pairwise GW distances for large sets of mm-spaces,
see \cite{BBS2021linear} for more details.
The most crucial point of LGW is the selection of the reference space $\SS$,
which significantly affects the approximation properties.
Since LGW admits the bounds
\begin{equation}
    \label{eq:LGW_estimates}
    \GW(\XX,\YY) \leq \LGW_\SS(\XX,\YY) \leq \GW(\SS,\XX) + \GW(\SS,\YY),
\end{equation}
see \cite[Lem~III.2]{BBS2021linear},
the reference space should optimally lie between the considered mm-spaces.
If we want to compute the pairwise LGW distances between the mm-spaces $\XX_i$, $i=1, \dots, N$,
a reasonable $\SS$ would be 
\begin{equation*}
    \SS \in \argmin_{\tilde \SS}\sum_{i=1}^N \GW(\tilde \SS, \XX_i),
\end{equation*} 
where the minimization is taken over all mm-spaces.
Since the computation of the minimizer is intractable,
an alternative is given by the \emph{GW barycenter},
which is defined as
\begin{equation}
    \label{eq:bary}
    \SS \in 
    \argmin_{\tilde \SS} 
    \sum_{i=1}^N \GW^2(\tilde \SS, \XX_i).
\end{equation}
The computational aspects of GW barycenters are discussed in \cite{PCS2016,BBS2022multi}.

For certain reference spaces, 
LGW coincides with GW.

\begin{example}
    Let $\SS$ be the single point mm-space,
    i.e.\ $S = \{s_0\}$, $d_S(s_0,s_0) = 0$, 
    and $\sigma = \delta_{\{s_0\}}$. 
    In this case, 
    the set of optimal plans simplifies to
    $\Pio(\SS,\XX) = \{\delta_{s_0} \otimes \mu\}$
    and $\Pio(\SS,\YY) = \{\delta_{s_0} \otimes \nu\}$. 
    The three-plans in \cref{eq:LGW} hence become
    \begin{align*}
        &\{\pi \in \p(S \times X \times Y)
        : (P_{S \times X})_\# \pi \in \Pio(\SS,\XX),
        (P_{S \times Y})_\# \pi \in \Pio(\SS,\YY)\} 
        \\
        &=
        \{\pi \in \p(S \times X \times Y)
        : (P_{S \times X})_\# \pi = \delta_{s_0} \otimes \mu,
        (P_{S \times Y})_\# \pi = \delta_{s_0} \otimes \nu \}
        \\
        &= \{ \delta_{s_0} \otimes \tilde \pi 
        : \tilde \pi \in \Pi(\mu,\nu)\}.
    \end{align*}
    The objective in \cref{eq:LGW} for a feasible $\pi$ thus 
    coincides with the objective in \cref{eq:GW} at $(P_{X\times Y})_\# \pi$;
    so 
    \begin{equation*}
        \LGW_\SS(\XX,\YY) = \GW(\XX,\YY).    
    \end{equation*}
\end{example}

\begin{example}
    Notably,
    the identity $\LGW_\SS(\XX,\YY) = \GW(\XX,\YY)$ 
    also holds for $\SS = \XX$ and $\SS = \YY$
    due to the bounds in \cref{eq:LGW_estimates}.
    In the case $\SS = \XX$ for instance,
    the three-plans in \cref{eq:LGW}
    have the form $(P_X, I, P_Y)_\# \tilde \pi$ 
    with $\tilde \pi \in \Pi(\mu,\nu) \subset \p(X \times Y)$,
    where $I \colon X \to X$ is some measure-preserving isometry.
    Therefore, the values of \cref{eq:GW} and \cref{eq:LGW}
    coincide for $(P_X, I, P_Y)_\# \tilde \pi$ and $\tilde \pi$ respectively.
\end{example}

\section{Multi-marginal Approximation}

Recently, GW was extended to the multi-marginal context \cite{BBS2022multi}. In this section, we show that a multi-marginal formulation can be used to approximate $\LGW$ in \cref{eq:LGW}. Before stating the result, we give a brief discussion of the multi-marginal setting. 
Let
$\XX_i \coloneqq (X_i,d_i,\mu_i)$, $i=1,\dots, N$, be mm-spaces, we set
\begin{equation*}
    \vX \coloneqq \bigtimes_{i=1}^N X_i
    \qquad\text{with}\qquad
    \vx \coloneqq (x_1, \dots, x_N),
    \;
    x_i \in X_i.
\end{equation*}
The \emph{multi-marginal GW transport} problem is given by
\begin{equation}
    \label{eq:MGW}
  \MGW(\XX_1,\dotsc,\XX_N)
  \coloneqq \inf_{\pi \in \Pi(\mu_1,\dots,\mu_N)}
  \int_{\vX^2} c(\vx,\vx') \dx \pi(\vx) \dx \pi(\vx'),
\end{equation}
where $\Pi(\mu_1,\dots,\mu_N) \coloneqq \{\pi \in \p(X) : (P_{X_i})_\# \pi = \mu_i \}$
and $c \colon \vX \times \vX \to [0,\infty]$ is some cost function.
Typically, cost functions $c$ involve the pairwise quadratic distances of the metrics $d_1,\dotsc,d_N$ like in
\begin{equation}   \label{eq:cost}
  c(\vx,\vx')
  \coloneqq
  \sum_{i,j=1}^N
  c_{ij} |d_i(x_i,x'_i) - d_j(x_j, x_j')|^2
  \qquad\text{with}\qquad
  c_{ij} \ge 0.
\end{equation}
Computationally, it is advantageous to solve an entropic, bi-convex relaxation of \cref{eq:MGW} by an alternating minimization scheme \cite{SejViaPey21,BBS2022multi}. Each step of this scheme consists of the minimization of a regularized multi-marginal optimal transport problem, which can be solved efficiently for certain cost functions using the multi-marginal Sinkhorn algorithm from \cite{BLNS2021}.

The multi-marginal $\GW$ methods can be leveraged to compute $\GW$ barycenters.
For arbitrary weights $\rho_i > 0$, $i = 1,\dotsc,N$ with $\sum_{i=1}^N \rho_i = 1$, 
a \emph{free-support barycenter} between $\XX_1, \dots, \XX_N$ 
is defined as
\begin{equation*}
    \SS \in 
    \argmin_{\tilde \SS} 
    \sum_{i=1}^N \rho_i \GW^2(\tilde \SS, \XX_i),
\end{equation*}
where the minimization is taken over all mm-spaces.

\begin{theorem}[Free-Support Barycenter, {\cite[Thm~6.1]{BBS2022multi}}] 
  \label{thm:gen-bary}
  Let $\XX_i$ be given mm-spaces,
  and let $\rho_i \ge 0$ be weights with $\sum_{i=1}^N \rho_i = 1$. 
  Then a free-support barycenter $\SS^* \coloneqq (S^*, d^*, \sigma^*)$
  is given by
  $S^* \coloneqq \vX$,
  $d^*(\vx,\vx') \coloneqq \sum_{i=1}^N \rho_i d_i(x_i,x_i')$,
  and $\sigma^* \coloneqq \pi^*$,
  where $\pi^*$ is a minimizer of $\MGW(\XX_1, \dots, \XX_N)$
  with cost function
  \begin{equation*}
    c(\vx,\vx')
    \coloneqq
    \frac12
    \sum_{i,j=1}^N
    \rho_i \rho_j \,
    \lvert d_i(x_i,x_i') - d_j(x_j,x_j') \rvert^2.
  \end{equation*}
\end{theorem}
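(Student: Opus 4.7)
The plan is to prove matching upper and lower bounds on $\sum_{i=1}^N \rho_i \GW^2(\tilde\SS, \XX_i)$, driven by the elementary identity
\begin{equation*}
\sum_{i=1}^N \rho_i (a - b_i)^2 = (a - \bar b)^2 + \tfrac12 \sum_{i,j=1}^N \rho_i \rho_j (b_i - b_j)^2, \qquad \bar b \coloneqq \sum_{j=1}^N \rho_j b_j,
\end{equation*}
valid whenever $\sum_i \rho_i = 1$. In particular, $\sum_i \rho_i (a-b_i)^2 \ge \tfrac12 \sum_{i,j} \rho_i \rho_j (b_i-b_j)^2$ with equality iff $a = \bar b$. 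Applied pointwise with $a = d^*(\vx,\vx')$ and $b_i = d_i(x_i,x_i')$, the choice $d^* = \sum_i \rho_i d_i$ realises exactly the equality case, which is what makes the construction tight.

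For the upper bound, I would feed $\pi_i^\star \coloneqq (\id_{\vX}, P_{X_i})_\# \pi^* \in \Pi(\pi^*, \mu_i)$ into the definition of $\GW^2(\SS^*,\XX_i)$. Evaluating the GW objective against $\pi_i^\star$, multiplying by $\rho_i$, summing over $i$, and swapping sum and integral gives
\begin{equation*}
\sum_{i=1}^N \rho_i \GW^2(\SS^*,\XX_i) \le \int_{\vX^2} \sum_{i=1}^N \rho_i \lvert d^*(\vx,\vx') - d_i(x_i,x_i') \rvert^2 \dx \pi^*(\vx) \dx \pi^*(\vx') = \int_{\vX^2} c(\vx,\vx') \dx \pi^*(\vx) \dx \pi^*(\vx'),
\end{equation*}
where the last equality uses the identity in the equality case. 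The right-hand side equals $\MGW(\XX_1,\dots,\XX_N)$ since $\pi^*$ is a minimizer by assumption.

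For the lower bound, let $\tilde\SS = (\tilde S, \tilde d, \tilde\sigma)$ be arbitrary and pick $\tilde\pi_i \in \Pio(\tilde\SS,\XX_i)$. The standard gluing lemma produces $\tilde\pi \in \p(\tilde S \times \vX)$ with $(P_{\tilde S\times X_i})_\#\tilde\pi = \tilde\pi_i$; in particular $(P_{\vX})_\# \tilde\pi \in \Pi(\mu_1,\dots,\mu_N)$. Rewriting each $\GW^2(\tilde\SS,\XX_i)$ against $\tilde\pi$ and applying the pointwise inequality with $a = \tilde d(\tilde s,\tilde s')$ and $b_i = d_i(x_i,x_i')$, one obtains
\begin{equation*}
\sum_{i=1}^N \rho_i \GW^2(\tilde\SS,\XX_i) \ge \int c(\vx,\vx') \dx (P_{\vX})_\#\tilde\pi(\vx) \dx (P_{\vX})_\#\tilde\pi(\vx') \ge \MGW(\XX_1,\dotsc,\XX_N).
\end{equation*}
Specialising to $\tilde\SS = \SS^*$ combines with the upper bound to give equality, proving that $\SS^*$ is a free-support barycenter.

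The one delicate point I expect is showing that $\SS^*$ is a genuine mm-space in the sense of the paper: $d^*$ is only a pseudo-metric on $\vX$ a priori, and $\pi^*$ need not have full support. The standard remedy is to pass to the quotient of $\mathrm{supp}(\pi^*)$ by $\vx\sim\vx' \iff d^*(\vx,\vx')=0$. Since $\GW$ descends to isometry classes, the quantitative bounds above are unaffected; this is where I expect to invest most of the care, while the identity-plus-gluing argument is essentially bookkeeping.
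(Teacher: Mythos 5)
Your proposal is correct, and the paper itself gives no proof of this statement---it is imported verbatim from \cite{BBS2022multi} (Thm~6.1), where the argument is essentially the one you give: the variance identity $\sum_i \rho_i(a-b_i)^2 = (a-\bar b)^2 + \tfrac12\sum_{i,j}\rho_i\rho_j(b_i-b_j)^2$, the gluing of optimal two-marginal plans for the lower bound, and the projection plans $(\id_{\vX},P_{X_i})_\#\pi^*$ for the upper bound. Your closing remark about passing to the quotient of $\mathrm{supp}(\pi^*)$ under $d^*(\vx,\vx')=0$ correctly identifies the only genuine technicality in making $\SS^*$ an mm-space in the paper's sense.
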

In certain cases it is desirable to fix the support of the barycenter beforehand. This leads to the \emph{fixed-support barycenter formulation}
\begin{equation*}
  \argmin_{\sigma \in \p(S)}
  \sum_{i=1}^N \rho_i \GW^2(\XX_i,\SS)
  \qquad\text{with}\qquad
  \SS = (S, d, \sigma).
\end{equation*}

\begin{theorem}[Fixed-Support Barycenter, {\cite[Thm~6.2]{BBS2022multi}}]
  \label{thm:rest-bary}
  Let $\XX_i$ be given mm-spaces,
  and let $\rho_i \ge 0$ be weights
  with $\sum_{i=1}^N \rho_i = 1$.
  A fixed-support barycenter
  $\SS^* \coloneqq (S, d, \sigma^*)$
  is given by $\sigma^* \coloneqq (P_S)_\# \pi^*$,
  where $\pi^*$ minimizes
  \begin{equation}
  \label{eq:fix-supp-bary}
  \inf_{\substack{\pi \in \p(X_1 \times \cdots \times X_N \times S)\\ 
  (P_{\vX})_\# \pi \in \Pi(\mu_1,\dots,\mu_N)}}
  \int_{(\vX \times S)2} c((\vx,s),(\vx',s')) \dx \pi(\vx,s) \dx \pi(\vx',s'),
  \end{equation}
  with cost function
  \begin{equation*}
    c((\vx,s),(\vx',s'))
    \coloneqq
    \sum_{i=1}^N \rho_i
    | d_i(x_i,x_i') - d(s,s')|^2.
  \end{equation*}
\end{theorem}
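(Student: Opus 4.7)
The plan is to establish the equivalence
\[
\inf_{\sigma \in \p(S)} \sum_{i=1}^N \rho_i \, \GW^2(\XX_i,\SS) \;=\; \inf_{\substack{\pi \in \p(\vX \times S)\\ (P_\vX)_\# \pi \in \Pi(\mu_1,\dots,\mu_N)}} \int_{(\vX \times S)^2} c \dx\pi \dx\pi,
\]
where $\SS = (S,d,\sigma)$, and to read off $\sigma^*$ as the $S$-marginal of a minimizer $\pi^*$. I would prove both inequalities by matching feasible triples $(\sigma, \pi_1, \dots, \pi_N)$ of the left-hand side (with $\pi_i \in \Pi(\mu_i,\sigma)$) with feasible $\pi$ of the right-hand side.

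For the inequality ``$\ge$'', I would fix arbitrary $\sigma \in \p(S)$ together with $\GW$-optimal plans $\pi_i \in \Pi(\mu_i,\sigma)$ (which exist by \cite[Cor~10.1]{memoli2011gromov}) and construct a single $\pi \in \p(\vX \times S)$ with $(P_{X_i \times S})_\# \pi = \pi_i$ via iterated gluing over the shared marginal $\sigma$. Concretely, disintegrate each $\pi_i = \int_S \pi_i^s \otimes \delta_s \dx \sigma(s)$ and define
\[
\pi \coloneqq \int_S \pi_1^s \otimes \cdots \otimes \pi_N^s \otimes \delta_s \dx \sigma(s),
\]
which is feasible in \cref{eq:fix-supp-bary}. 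Since the $i$-th summand of $c$ depends only on $(x_i, s, x_i', s')$, the pushforward rule yields
\[
\int c \dx \pi \dx \pi = \sum_{i=1}^N \rho_i \int |d_i(x_i,x_i') - d(s,s')|^2 \dx \pi_i \dx \pi_i = \sum_{i=1}^N \rho_i \GW^2(\XX_i,\SS),
\]
so the multi-marginal value is bounded above by $\sum_i \rho_i \GW^2(\XX_i,\SS)$; taking the infimum over $\sigma$ gives ``$\ge$''.

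For the inequality ``$\le$'', I would invert the same pushforward identity: given any feasible $\pi$ in \cref{eq:fix-supp-bary}, set $\sigma \coloneqq (P_S)_\# \pi$ and $\pi_i \coloneqq (P_{X_i \times S})_\# \pi \in \Pi(\mu_i,\sigma)$ to obtain
\[
\int c \dx \pi \dx \pi = \sum_{i=1}^N \rho_i \int |d_i(x_i,x_i') - d(s,s')|^2 \dx \pi_i \dx \pi_i \ge \sum_{i=1}^N \rho_i \GW^2(\XX_i, \SS).
\]
This delivers the matching lower bound; combining both, any minimizer $\pi^*$ of \cref{eq:fix-supp-bary} forces all intermediate inequalities to be equalities, so $\SS^* = (S, d, (P_S)_\# \pi^*)$ is a fixed-support barycenter and the induced plans $\pi_i^*$ are $\GW$-optimal. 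The main technical subtlety is the simultaneous gluing of $N$ plans across their common marginal $\sigma$, which relies on the existence of regular conditional probabilities guaranteed by the compactness of $S$ and the $X_i$ built into the mm-space axioms; everything else is a direct expansion of the cost.
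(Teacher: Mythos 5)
Your argument is correct and is essentially the standard proof of this equivalence: the present paper states the theorem only by citation to \cite[Thm~6.2]{BBS2022multi} without reproducing a proof, and the proof given there proceeds by the same two steps you describe, namely gluing $\GW$-optimal plans $\pi_i \in \Pi(\mu_i,\sigma)$ along their common $S$-marginal (via disintegration) to bound the multi-marginal value from above by $\sum_{i=1}^N \rho_i \GW^2(\XX_i,\SS)$, and projecting an arbitrary feasible multi-marginal plan onto the pairs $X_i \times S$ for the reverse bound. The only point worth adding explicitly is a one-line existence argument for the minimizer $\pi^*$ of \cref{eq:fix-supp-bary} (weak compactness of the constraint set together with continuity of the bounded cost on the compact space $(\vX \times S)^2$), which your write-up assumes implicitly.
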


\begin{remark}
    The minimization problem in \cref{eq:fix-supp-bary},
    where the last marginal of $\pi \in \p(\vX \times S)$ is unconstrained,
    is an instance of \emph{unbalanced} multi-marginal GW, see \cite{BBS2022multi}.
\end{remark}

For the remainder of the section, let $\SS = (S,d_S,\sigma)$, $\XX = (X,d_X,\mu)$ and $\YY = (Y,d_Y,\nu)$ be mm-spaces. For $\eps > 0$, let
\begin{align*}
    &c_{\eps}((s,x,y),(s',x',y')) 
    \\
    &\coloneqq \eps (d_X(x,x') - d_Y(y,y'))^2 + (d_S(s,s') - d_X(x,x'))^2 + (d_S(s,s') - d_2(y,y'))^2.
\end{align*}
We consider the associated multi-marginal GW problem
\begin{equation}\label{eq:MGW-eps}
\MGW_\eps(\SS,\XX,\YY) \coloneqq \inf_{\pi \in \Pi(\sigma,\mu,\nu)} \int_{(S \times X \times Y)^2} c_{\eps}((s,x,y),(s',x',y')) \dx \pi(s,x,y) \dx \pi(s',x',y').
\end{equation}
The following result is analogous to \cite[Prop.~8]{nenna2022transport} 
which provides an analogous characterization in the OT setting.

\begin{theorem}
    \label{thm:MGW-LGW}
    For all $\eps > 0$, 
    let $\pi_\eps \in \Pi(\sigma,\mu,\nu)$ be a solution to $\MGW_\eps(\SS,\XX,\YY)$
    in \cref{eq:MGW-eps}.
    Then every (weak) accumulation point $\bar{\pi}$ of $(\pi_\epsilon)_{\epsilon > 0}$ 
    minimizes the LGW functional in \cref{eq:LGW}, i.e.\
    \[
    \LGW_\SS(\XX,\YY) = \int_{(S \times X \times Y)^2}  \lvert d_X(x,x') - d_Y(y,y') \rvert^2 \dx \bar{\pi}(s,x,y) \dx \bar{\pi}(s',x',y').
    \]
\end{theorem}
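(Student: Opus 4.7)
The plan is to read the problem as a selection among minimizers of the dominant term as $\eps \to 0$. Decompose the integrand as $c_\eps = \eps \hat c + c_0$, where $\hat c((s,x,y),(s',x',y')) = (d_X(x,x') - d_Y(y,y'))^2$ is the LGW integrand from \cref{eq:LGW}, and $c_0((s,x,y),(s',x',y')) = (d_S(s,s') - d_X(x,x'))^2 + (d_S(s,s') - d_Y(y,y'))^2$. Split the objective in \cref{eq:MGW-eps} accordingly as $\eps F(\pi) + G(\pi)$. The key structural observation is that $G$ depends on $\pi$ only through the pair marginals $(P_{S\times X})_\#\pi \in \Pi(\sigma,\mu)$ and $(P_{S\times Y})_\#\pi \in \Pi(\sigma,\nu)$, on which it coincides with the sum of the two $\GW$-type energies. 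Consequently, for every $\pi \in \Pi(\sigma,\mu,\nu)$ one has $G(\pi) \ge G_\ast := \GW^2(\SS,\XX) + \GW^2(\SS,\YY)$, with equality exactly when both pair marginals are optimal---that is, precisely when $\pi$ is admissible in the LGW definition. Such an admissible $\pi$ always exists by gluing arbitrary $\pi_\SS^\XX \in \Pio(\SS,\XX)$ and $\pi_\SS^\YY \in \Pio(\SS,\YY)$ along their common $\SS$-marginal $\sigma$.

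Fixing any LGW-admissible competitor $\pi'$, optimality of $\pi_\eps$ for \cref{eq:MGW-eps} yields
\[
G(\pi_\eps) + \eps F(\pi_\eps) \le G(\pi') + \eps F(\pi') = G_\ast + \eps F(\pi').
\]
Combined with the lower bound $G(\pi_\eps) \ge G_\ast$ from the previous paragraph, this gives both $F(\pi_\eps) \le F(\pi')$ uniformly in $\eps$ and $G(\pi_\eps) \to G_\ast$ as $\eps \to 0$. Pass to a subsequence with $\pi_{\eps_k} \weakly \bar\pi$, available by compactness of $S \times X \times Y$; weak convergence preserves the fixed marginals, so $\bar\pi \in \Pi(\sigma,\mu,\nu)$. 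Since the integrands of $F$ and $G$ are bounded and continuous and $\pi_{\eps_k} \weakly \bar\pi$ implies $\pi_{\eps_k} \otimes \pi_{\eps_k} \weakly \bar\pi \otimes \bar\pi$, both functionals are weakly continuous. Hence $G(\bar\pi) = G_\ast$, which places $\bar\pi$ in the LGW feasible set, and $F(\bar\pi) \le F(\pi')$ for every LGW-admissible $\pi'$, which identifies $\bar\pi$ as an LGW minimizer and delivers the claimed identity.

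The conceptual heart of the argument is the decomposition of $c_\eps$ into a piece whose minimizers over $\Pi(\sigma,\mu,\nu)$ are exactly the LGW-feasible plans and a regularizing piece at scale $\eps$ that selects among those minimizers. Once this separation is made explicit, the remaining ingredients---compactness of the feasible set, weak continuity of $\pi \mapsto \int h \dx\pi\dx\pi$ on a compact space for bounded continuous $h$, and the gluing construction providing a reference competitor---are standard, so no essential technical obstacle remains.
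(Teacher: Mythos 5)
Your proposal is correct and follows essentially the same route as the paper's proof: the decomposition $c_\eps = \eps \hat c + c_0$, the observation that the $c_0$-part is minimized over $\Pi(\sigma,\mu,\nu)$ exactly by the LGW-feasible plans (giving feasibility of $\bar\pi$ via weak continuity), and the inequality $F(\pi_\eps) \le F(\pi')$ obtained by comparing against an admissible competitor and dividing by $\eps$. Your bookkeeping with $F$ and $G$ is, if anything, a slightly cleaner packaging of the same argument.
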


\begin{proof}
    Let $\bar{\pi}$ be an accumulation point of $(\pi_\eps)_{\eps>0}$.
    Hence there exists a subsequence $(\pi_{\epsilon(n)})_{n \in \NN}$
    with $\pi_{\eps(n)} \weakly \bar \pi$ where $(\eps(n))_{n \in \NN}$ is a monotonically decreasing sequence with $\eps(n) \to 0$ as $n \to \infty$.
    First, we show that $\bar{\pi}$ is feasible,
    which means
    that the plans $(P_{S\times X})_\# \bar \pi$ and $(P_{S\times Y})_\# \bar \pi$
    are optimal.
    For this, let $\tilde \pi \in \p(S \times X \times Y)$ be an arbitrary plan with $(P_{S \times X})_\# \tilde \pi \in \Pio(\SS,\XX)$ and $(P_{S \times Y})_\# \tilde \pi \in \Pio(\SS,\YY)$.
    For every $\bar{n} \in \NN$,
    the optimality of $\pi_{\eps(\bar n)}$ and the monotonicity of the cost function imply
    \begin{align*}
    &\int_{(S \times X \times Y)^2}
    c_{\eps(\bar n)}((s,x,y),(s',x',y'))
    \dx \tilde \pi(s,x,y) \dx \tilde \pi(s',x',y') 
    \\
    &\geq
    \int_{(S \times X \times Y)^2} 
    c_{\eps(\bar n)}((s,x,y),(s',x',y')) 
    \dx \pi_{\eps(\bar n)}(s,x,y) \dx \pi_{\eps(\bar n)}(s',x',y')
    \\
    &\geq 
    \int_{(S \times X \times Y)^2}
    c_{\eps(n)}((s,x,y),(s',x',y'))
    \dx \pi_{\eps(\bar n)}(s,x,y) \dx \pi_{\eps(\bar n)}(s',x',y')
    \end{align*}
    for all $n \ge \bar n$.
    By the compactness of the spaces,
    the linearity of the integral ensures 
    that the right-hand side converges as $n \to \infty$.
    More precisely,
    we obtain
    \begin{align*}
    &\int_{(S \times X \times Y)^2}
    c_{\eps(\bar n)}((s,x,y),(s',x',y'))
    \dx \tilde \pi (s,x,y) \dx \tilde \pi (s',x',y')
    \\
    &\ge
    \int_{(S \times X \times Y)^2}
    (d_S(s,s') - d_X(x,x'))^2 + (d_S(s,s') - d_Y(y,y'))^2 
    \dx \pi_{\eps(\bar n)}(s,x,y) \dx \pi_{\eps(\bar n)}(s',x',y').
    \end{align*}
    Now passing the limit $\bar n \to \infty$ on both sides,
    and using the linearity of the integral
    as well as the weak convergence of $(\pi_{\eps(\bar n)})_{\bar n \in \NN}$,
    we have
    \begin{align*}
    &\GW^2(\SS,\XX) + \GW^2(\SS,\YY)
    \\
    &= \int_{(S \times X \times Y)^2} (d_S(s,s') - d_X(x,x'))^2 + (d_S(s,s') -d_Y(y,y'))^2 \dx \tilde\pi(s,x,y) \dx \tilde\pi(s',x',y')\\
    &\geq 
    \int_{(S \times X \times Y)^2} 
    (d_S(s,s') -d_X(x,x'))^2 
    +
    (d_S(s,s') - d_Y(y,y'))^2 
    \dx \bar{\pi}(s,x,y)\dx \bar{\pi}(s',x',y')\\
    &=
    \int_{(S \times X \times Y)^2} 
    (d_S(s,s') -d_X(x,x'))^2 
    \dx \bar{\pi}(s,x,y)\dx \bar{\pi}(s',x',y')
    \\
    &\quad+
    \int_{(S \times X \times Y)^2}
    (d_S(s,s') - d_Y(y,y'))^2 
    \dx \bar{\pi}(s,x,y)\dx \bar{\pi}(s',x',y').
    \end{align*}
    Since the first integral on the right-hand side is an upper bound for $\GW^2(\SS,\XX)$,
    and the second for $\GW^2(\SS,\YY)$,
    both integrals have to coincide with $\GW^2(\SS,\XX)$ and $\GW^2(\SS,\YY)$ respectively.
    The marginals of $\bar \pi$ are thus optimal,
    and $\bar \pi$ is feasible.
    
    Next, we show that $\bar{\pi}$ is optimal for $\LGW_\SS(\XX,\YY)$.
    Let $\tilde \pi \in \p(S \times X \times Y)$ be again an arbitrary plan
    with $(P_{S \times X})_\# \tilde \pi \in \Pio(\SS,\XX)$ 
    and $(P_{S \times Y})_\# \tilde \pi \in \Pio(\SS,\YY)$.
    Exploiting the optimality of $(P_{S \times X})_\#\tilde\pi$
    and $(P_{S \times Y})_\#\tilde\pi$,
    we get
    \begin{align*}
        &\int_{(S \times X \times Y)^2}
        \eps(n) \, (d_X(x,x') - d_Y(y,y'))^2 
        \dx \pi_{\eps(n)}(s,x,y) \dx \pi_{\eps(n)}(s',x',y')
        \\
        &\le 
        \int_{(S \times X \times Y)^2}
        \eps(n) \, (d_X(x,x') - d_Y(y,y'))^2
        \dx \pi_{\eps(n)}(s,x,y) \dx \pi_{\eps(n)}(s',x',y')
        \\
        &\quad+
        \int_{(S \times X \times Y)^2}
        (d_S(s,s') - d_X(x,x'))^2 
        + (d_S(s,s') - d_Y(y,y'))^2 
        \dx \pi_{\eps(n)}(s,x,y) \dx \pi_{\eps(n)}(s',x',y')
        \\
        &\quad- 
        \int_{(S \times X \times Y)^2} 
        (d_S(s,s') - d_X(x,x'))^2 
        + (d_S(s,s') -d_Y(y,y'))^2 
        \dx \tilde\pi(s,x,y) \dx \tilde\pi(s',x',y')
        \\
        &=
        \int_{(S \times X \times Y)^2}
        c_{\eps(n)}((s,x,y),(s',x',y'))
        \dx \pi_{\eps(n)}(s,x,y) \dx \pi_{\eps(n)}(s',x',y')\\
        &\quad-
        \int_{(S \times X \times Y)^2}
        (d_S(s,s') - d_X(x,x'))^2 
        + (d_S(s,s') - d_Y(y,y'))^2 
        \dx \tilde\pi(s,x,y) \dx \tilde\pi(s',x',y').
    \end{align*}
    Due to the optimality of $\pi_{\eps(n)}$ 
    with respect to the first term in the last line,
    we continue the estimation by
    \begin{align*}
        &\int_{(S \times X \times Y)^2}
        \eps(n) \, (d_X(x,x') - d_Y(y,y'))^2 
        \dx \pi_{\eps(n)}(s,x,y) \dx \pi_{\eps(n)}(s',x',y')
        \\
        &\leq 
        \int_{(S \times X \times Y)^2} 
        c_{\eps(n)}((s,x,y),(s',x',y'))
        \dx \tilde\pi(s,x,y) \dx \tilde\pi(s',x',y')\\
        &\quad-
        \int_{(S \times X \times Y)^2}
        (d_S(s,s') - d_X(x,x'))^2
        + (d_S(s,s') - d_Y(y,y'))^2 
        \dx \tilde\pi(s,x,y) \dx \tilde\pi(s',x',y')
        \\
        &= 
        \int_{(S \times X \times Y)^2} 
        \eps(n) \, (d_X(x,x') - d_Y(y,y'))^2 
        \dx \tilde \pi(s,x,y) \dx \tilde \pi(s',x',y').
    \end{align*}
    Dividing both sides by $\eps(n)$ yields
    \begin{align*}
    &\int_{(S \times X \times Y)^2} (d_X(x,x') - d_Y(y,y'))^2 
        \dx \pi_{\eps(n)}(s,x,y) \dx \pi_{\eps(n)}(s',x',y')\\
        &\le \int_{(S \times X \times Y)^2} (d_X(x,x') - d_Y(y,y'))^2 
        \dx \tilde \pi(s,x,y) \dx \tilde \pi(s',x',y').
    \end{align*}
    Thus, passing the limit and exploiting the weak convergence of $(\pi_{\eps(n)})_{n \in \NN}$, we arrive at
    \begin{align*}
    &\int_{(S \times X \times Y)^2} (d_X(x,x') - d_Y(y,y'))^2 
        \dx \bar{\pi}(s,x,y) \dx \bar{\pi}(s',x',y')\\
        &\le \int_{(S \times X \times Y)^2} (d_X(x,x') - d_Y(y,y'))^2 
        \dx \tilde \pi(s,x,y) \dx \tilde \pi(s',x',y').
    \end{align*}
    Minimizing the right-hand side over all $\tilde{\pi} \in \p(S \times X \times Y)$ with $(P_{S \times X})_\# \tilde{\pi} \in \Pio(\SS,\XX)$ and $(P_{S \times Y})_\# \tilde{\pi} \in \Pio(\SS,\YY)$ we finally obtain
    \begin{align*}
        &\int_{(S \times X \times Y)^2} (d_X(x,x') - d_Y(y,y'))^2 
        \dx \bar{\pi}(s,x,y) \dx \bar{\pi}(s',x',y')\\
        &\le 
        \inf_{
\substack{\tilde{\pi} \in \p(S \times X \times Y) \\ (P_{S \times X})_\# \tilde{\pi} \in \Pio(\SS,\XX) \\ 
(P_{S \times Y})_\# \tilde{\pi} \in \Pio(\SS,\YY)}}
    \int_{(S \times X \times Y)^2} (d_X(x,x') - d_Y(y,y'))^2 
        \dx \tilde \pi(s,x,y) \dx \tilde \pi (s',x',y')\\[5pt]
        &= \LGW_\SS(\XX,\YY),
    \end{align*}
    yielding the desired result.
\end{proof}

\begin{remark}
    Due to the weak compactness of $\Pi(\sigma,\mu,\nu)$, 
    the sequence $(\pi_\eps)_{\eps > 0}$ in \cref{thm:MGW-LGW}
    has at least one accumulation point;
    so there always exists a weakly convergent sequence $(\pi_{\eps(n)})_{n \in \NN}$
    whose limit yields an optimal plan of the LGW functional in \cref{eq:LGW}.
\end{remark}

\section*{Acknowledgments}
The funding by the German Research Foundation (DFG) within the RTG
2433 DAEDALUS and by the BMBF project ``VI-Screen'' (13N15754) is
gratefully acknowledged.

\vspace{\baselineskip}


\providecommand{\WileyBibTextsc}{}
\let\textsc\WileyBibTextsc
\providecommand{\othercit}{}
\providecommand{\jr}[1]{#1}
\providecommand{\etal}{~et~al.}

\end{document}